\definecolor{webgreen}{rgb}{0,.5,0}
\definecolor{webbrown}{rgb}{.6,0,0}
\newtheorem{theorem}{Theorem}[section]
\newtheorem{lemma}[theorem]{Lemma}
\newtheorem{corollary}[theorem]{Corollary}
\newtheorem{proposition}[theorem]{Proposition}
\newtheorem{question}[theorem]{Question}
\theoremstyle{definition}
\newtheorem{remark}[theorem]{Remark}
\begin{document}

\author{Jason Bell}
\address[J. Bell]{Department of Pure Mathematics \\
University of Waterloo\\
Waterloo, ON  N2L 3G1 \\
Canada}
\email[J. Bell]{jpbell@uwaterloo.ca}
\thanks{Research of the first-named author supported by NSERC Grant RGPIN-2016-03632.}

\author{Marni Mishna}
\address[M. Mishna ]{Department of Mathematics  \\
Simon Fraser University\\
Burnaby, BC  V5A 1S6 \\
Canada}
\email[M. Mishna]{mmishna@sfu.ca}
\thanks{Research of the second-named author supported by NSERC Grant RGPIN-2017-04157}

\title{On the complexity of the cogrowth sequence}

\begin{abstract} Given a finitely generated group with generating set $S$, we study the \emph{cogrowth} sequence, which is the number of words of length $n$ over the alphabet $S$ that are equal to one.  This is related to the probability of return for walks in a Cayley graph with steps from $S$.  We prove that the cogrowth sequence is not $P$-recursive when~$G$ is an amenable group of superpolynomial growth, answering a question of Garrabant and Pak.  \end{abstract}

\maketitle

\section{Introduction}
One of the fundamental problems in the theory of finitely generated groups is the word problem, which asks whether there is an algorithm for determining when a word in a symmetric set of generators is equal to the identity.  For many classes of groups the word problem is decidable (i.e., there exists a decision procedure for determining if a word on a set of generators is equal to the identity). This includes free groups, one-relator groups, polycyclic groups and fundamental groups of closed orientable two-manifolds of genus greater than one.  On the other hand, there exist groups with unsolvable word problem, with the first such example being given by Novikov~\cite{Nov}.  

Algorithmically, given a finitely generated group $G$ with a symmetric generating set $S=S^{-1}$, the word problem is to find the elements in the free monoid on $S$  whose images in $G$ are equal to the identity.  In general, a \emph{language} over a finite alphabet $\Sigma$ is just a subset of the free monoid $\Sigma^*$ on $\Sigma$.  In our setting, the set of all possible sequences of letters from $S$ is denoted $S^*$, and then the set of words in $S*$ that correspond to elements whose image in $G$ is equal to the identity is a a sublanguage of $S^*$, denoted $\mathcal{L}(G;S)$. The complexity of the language  $\mathcal{L}(G;S)$ gives a hint to the difficulty of the word problem for $G$, and ultimately reveals information about $G$.  From a computational point of view, there is a coarse hierarchy, formulated by Chomsky, which says that sublanguages of $S^*$ can be divided into five nested classes roughly corresponding to the complexity of the machine needed to correctly decide if a string in $S^*$ is the language or not. The simplest class are the regular languages, which can be recognized by finite state-automata. These are contained in the set of context-free languages, recgonized by pushdown automata. These are contained in the set of context-sensitive languages (those recognized using linear-bounded non-deterministic Turing machines), in turn contained in recursively enumerable languages  (those that are recognized using Turing machines). There exists non-recursively enumerable languages so there is a strictly larger class containing of all possible languages. Readers unfamiliar with this classification are referred to Sipser \cite{Sipser} for more detailed definitions of the classes and their computational significance. 

% in general languages (including those that lie outside of the realm of classical computation); recursively enumerable languages (those that are recognized using Turing machines\footnote{We do not give the definition of a Turing machine and instead refer the reader to {\bf REF} ???. We point out, to aid with the reader's intuition, that Turing machines encompass everything that lies in the realm of classical computation, in the sense that anything that is computable by an algorithm can be computed by a Turing machine.}); context-sensitive languages (those produced using linear-bounded non-deterministic Turing machines); context-free languages (those produced using pushdown automata); and regular languages (those produced using finite-state automata).  

Adopting this point of view, it is natural to ask which finitely generated groups $G$ have the property that $\mathcal{L}:=\mathcal{L}(G,S)$ lies in a given class of the Chomsky hierarchy.  The answer is now understood for all classes except for context-sensitive.  It is known that $\mathcal{L}$ is regular if and only if $G$ is finite \cite{Ani}; $\mathcal{L}$ is context-free if and only if $G$ has a finite-index free subgroup (see Muller and Schupp~\cite{MS} with a missing ingredient supplied by work of Dunwoody~\cite{Dun}); $\mathcal{L}$ is recursively enumerable if and only if $G$ embeds in a finitely presented group~\cite{Hig}.  As mentioned, groups with context-sensitive word problem have not been classified, but an important result in this direction is that finitely generated subgroups of automatic groups have context-sensitive word problem~\cite{Shap}. These classes provide a taxonomy of groups in terms of the complexity of their word problem.

Given a sublanguage~$\mathcal{L}$ of a free monoid~$S^*$, one can often also gain insight into $\mathcal{L}$ by looking at the generating function for the number of words in $\mathcal{L}$ of length $n$. Generating functions have their own parallel containment hierarchy. The simplest combinatorial power series are expansions of rational functions; they are contained in the algebraic power series (those that generate finite extensions of the field of rational functions); next there are $D$-finite series which satisfy a linear homogeneous differential equation with rational function coefficients; after this comes the differentiably algebraic, or ADE, series (series satisfying algebraic differential equations), which have the property that the function and all of its derivatives generate a finite transcendence degree extension of the field of rational functions; all of these are contained in the set of general power series. 

Remarkably, these two hierarchies correspond to some extent on the lower end. It is natural to expect that simpler languages (with simplicity being understood in terms of where the language fits into the Chomsky hierarchy) should have well behaved generating functions compared to more complex languages and this is generally the case, up until one hits context-sensitive languages.  The simplest of these four classes in the Chomsky hierarchy is the collection of regular languages, which have rational generating series.  Context-free languages have algebraic generating series by a theorem of Chomsky and Sch\"utzenberger~\cite[Chapter III]{KS}.  At this point, the relationship between complexity of the language and complexity of the corresponding generating function breaks down.  For example, there are context-sensitive languages with non-differentiably algebraic generating functions. At the same time, a standard example of a context-sensitive language, $\{a^nb^nc^n: n\in \mathbb{N}\}$, has a rational generating function. 

Nonetheless, there are striking correspondences between finitely generated groups and the hierarchy of formal series. For a group~$G$ with generating set~$S$ we consider the ordinary generating function of the language $\mathcal{L}(G,S)$.  In this setting, the number of words of length~$n$ in $\mathcal{L}(S,G)$ is called the \emph{cogrowth} of $G$ with respect to $S$, and we denote it ${\rm CL}(n;G,S)$.  This is the \emph{cogrowth sequence of $G$}, and its ordinary generating function,  the series
\[F(t):= \sum {\rm CL}(n;G,S)t^n,\] is the \emph{cogrowth series of $G$ with respect to the generating set $S$\/}. 

Our main result concerns the cogrowth series for amenable groups (see \S2 for a definition).  Amenable groups are in some sense well-behaved and the class includes all solvable groups and groups of subexponential growth.  On the other hand, groups containing a free subgroup of rank two are non-amenable.  Our first main result is to show that the cogrowth series of an amenable group is never $D$-finite, except possibly when the group has polynomially bounded growth, answering a conjecture of Garrabant and Pak~\cite[Conjecture 13]{GP}.
\begin{theorem} \label{thm:main1} Let $G$ be a finitely generated amenable group that is not nilpotent-by-finite and let $S$ be a finite symmetric generating set for $G$.  Then the cogrowth series
$$\sum {\rm CL}(n;G,S)t^n$$ is not $D$-finite.  Equivalently, if $p_{G,S}(n)$ denotes the probability of return, then this sequence is not $P$-recursive.
\end{theorem}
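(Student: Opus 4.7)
The plan is to derive a contradiction from two incompatible asymptotic regimes for the return probability $p_{G,S}(n) = {\rm CL}(n;G,S)/|S|^n$. Assuming the cogrowth series $F(t) = \sum {\rm CL}(n;G,S) t^n$ is $D$-finite will, via the theory of $G$-functions, force $p_{G,S}(n)$ to decay at most polynomially (with possible logarithmic factors). On the other hand, amenability combined with the hypothesis that $G$ is not nilpotent-by-finite---hence by Gromov's theorem of superpolynomial growth---will force $p_{G,S}(n)$ to decay faster than any polynomial. These two regimes cannot coexist.

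For the arithmetic half, I would begin by noting that $F(t)$ has radius of convergence exactly $1/|S|$: Kesten's amenability criterion gives $\limsup_n {\rm CL}(n;G,S)^{1/n} = |S|$. Since the coefficients ${\rm CL}(n;G,S)$ are non-negative integers bounded above by $|S|^n$, the common-denominator and height bounds in Siegel's definition of a $G$-function are trivially satisfied, so $D$-finiteness of $F(t)$ would automatically make $F(t)$ a $G$-function. The Chudnovsky--Andr\'e--Katz theorem then tells us that the minimal linear differential operator annihilating $F(t)$ is Fuchsian with rational local exponents. Standard transfer-theorem analysis applied in $\Delta$-domains at each dominant singularity on the circle $|t|=1/|S|$ yields an asymptotic expansion
\[ {\rm CL}(n;G,S) = \sum_j c_j\,\rho_j^n\, n^{\alpha_j}(\log n)^{\beta_j} + \text{lower order terms}, \]
with $|\rho_j|=|S|$, $\alpha_j \in \mathbb{Q}$ and $\beta_j \in \mathbb{Z}_{\geq 0}$. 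Dividing by $|S|^n$ forces $p_{G,S}(n) = O(n^A (\log n)^B)$ for some constants $A,B$ along every subsequence on which it is nonzero.

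For the analytic half, I would invoke the classical theory of random walks on amenable groups. Because $G$ is amenable and not nilpotent-by-finite, Gromov's theorem implies that $G$ has superpolynomial growth. Estimates of Varopoulos, Coulhon--Saloff-Coste, and Pittet--Saloff-Coste then show that for \emph{any} amenable group of superpolynomial growth, the return probability of a symmetric random walk decays faster than any polynomial; groups of exponential growth in fact satisfy a bound of the form $p_{G,S}(n) \leq \exp(-c n^{1/3})$, and intermediate-growth cases (such as Grigorchuk's group) are handled by the corresponding F\o lner/isoperimetric profile estimates. This directly contradicts the at-most-polynomial upper bound from the previous paragraph, completing the proof.

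The hard part will be packaging this last step uniformly: the precise decay rate of $p_{G,S}(n)$ depends on the isoperimetric profile of $G$ and varies noticeably across the different subclasses of amenable groups (polycyclic of exponential growth, lamplighter-type, intermediate growth, and so on), so some care is required to distil just the qualitative statement that $-\log p_{G,S}(n) / \log n \to \infty$ for every amenable group of superpolynomial growth. By contrast, once integrality of ${\rm CL}(n;G,S)$ and the trivial bound ${\rm CL}(n;G,S) \leq |S|^n$ are in hand, the $G$-function machinery of the arithmetic half runs essentially as a black box.
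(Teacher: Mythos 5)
Your overall strategy (D-finiteness plus integrality forces $F$ to be a $G$-function with regular singularities and rational exponents; play this off against Varopoulos-type decay of the return probability for amenable groups of superpolynomial growth) is the same as the paper's, but the quantitative conclusion you extract on the $D$-finite side points in the wrong direction and this breaks the contradiction. From the singular expansion you conclude $p_{G,S}(n)=O(n^A(\log n)^B)$, i.e.\ an \emph{upper} bound, and then claim this contradicts superpolynomial decay of $p_{G,S}(n)$. It does not: a polynomial upper bound is perfectly compatible with $p_{G,S}(n)$ decaying faster than every polynomial. What the argument needs is a \emph{lower} bound of the form $p_{G,S}(n)\ge n^{-C}$ for infinitely many $n$. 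Extracting such a lower bound from coefficient asymptotics is not a black box: one must rule out cancellation among the contributions of the several equal-modulus singularities on $|t|=1/|S|$ (already the trivial parity phenomenon, where ${\rm CL}(n)=0$ for all odd $n$, shows such cancellations occur), and one must justify that the singular term at the Pringsheim singularity actually survives in the coefficient asymptotics along a subsequence. The paper sidesteps all of this: its Proposition~\ref{lem:div} shows directly from the local expansion \eqref{eq:F} that some derivative $F^{(j)}(t)$ blows up as $t\to\rho^-$ along the real axis, and divergence of the series $\sum {\rm CL}(n)n(n-1)\cdots(n-j+1)t^{n-j}$ at $t=\rho$ immediately yields ${\rm CL}(n)\ge |S|^n/n^{j+2}$ for infinitely many $n$, with no transfer theorem and no connection-problem issues.

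Two smaller points. First, on the probabilistic side you only need the single Varopoulos implication ($V(n)\ge cn^d$ for all $n$ implies $p_{G,S}(n)\le Cn^{-d/2}$), but to know that a non-virtually-nilpotent group satisfies $V(n)\ge c_d n^d$ for \emph{all} $n$ (not merely for infinitely many $n$, which is all that the negation of polynomial growth gives directly) you need the van den Dries--Wilkie strengthening of Gromov's theorem, which the paper invokes in Remark~\ref{rem:S}; "Gromov implies superpolynomial growth" as you state it leaves this gap, and it matters precisely because your lower bound (once corrected) will only hold along a subsequence. Second, your assessment of where the difficulty lies is inverted relative to the actual proof: the probabilistic half is a short remark, while the genuine work is making the $G$-function singularity analysis yield the infinitely-often lower bound --- which is exactly the step your proposal leaves unproved.
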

A celebrated theorem of Gromov says that finitely generated groups of polynomially bounded growth are precisely those groups that are virtually nilpotent.  In particular, Theorem~\ref{thm:main1} says that amenable groups with superpolynomial growth do not have $P$-recursive cogrowth sequence.  There are many examples of non-amenable groups with $D$-finite cogrowth generating series. Virtually free groups have algebraic cogrowth generating series~\cite{MS} and a result of Elder, Janse van Rensburg, Rechnitzer, and Wong~\cite{ER} shows that the cogrowth series for the Baumslag-Solitar group $BS(N,N)$, which is non-amenable, has $D$-finite cogrowth generating series. Elvey Price and Guttman~\cite{ElGu} use sophisticated numerical techniques on the cogrowth sequence of Thompson's group $F$ to demonstrate that the asymptotics are likely incompatible with the group being amenable. 

The strategy behind the proof of Theorem~\ref{thm:main1} is to use the fact that if the cogrowth generating series is $D$-finite then it must be a $G$-function.  Then, using work of Chudnovsky and Chudnovsky, Katz, and Andr\'e \cite{CC}, \cite{Katz}, \cite{And1} we get that the generating series must have a very specific expansion in an open set of the form $U\setminus L$, where $U$ is an open neighbourhood of a singularity and $L$ is a ray emanating from the singularity.  This expression, combined with work of Kesten and Varopolous on the cogrowth of amenable groups of superpolynomial growth, is sufficient to prove our result.

The outline of this paper is as follows.  In \S\ref{sec:GP} we give a quick review of amenability and the Garrabant-Pak conjecture.  In \S\ref{sec:heart}, which is the heart of the paper, we use asymptotic analysis of $G$-functions near singularities to give a criterion for non-$D$-finiteness of a power series.  In \S\ref{sec:th}, we give the proof of Theorem \ref{thm:main1} and in \S\ref{sec:conc} we give some concluding remarks and pose some questions.

\section{Amenable groups and the Garrabant-Pak conjecture}
\label{sec:GP}
In this brief section we first recall what it means for a group to be amenable, and then
state the conjecture of Garrabant and Pak \cite{GP}. Let $G$ be a locally compact Hausdorff
group. Then $G$ is equipped with Haar measure and we let
$L^{\infty}(G)$ be the essentially bounded measurable functions on
$G$. Roughly speaking, amenable groups are groups for which the
Banach-Tarski paradox does not occur when equipped with Haar measure.
More formally, $G$ is \emph{amenable} if there is a linear functional
$\Lambda\in \operatorname{Hom}(L^{\infty}(G),\mathbb{R})$ of norm $1$
with the following properties. If $f\in L^{\infty}(g)$ is nonnegative
almost everywhere then $\Lambda(f)\ge 0$ and
$\Lambda(g\cdot f)=\Lambda(f)$ for $g\in L^{\infty}(G)$ and $g\in
G$. The action of $G$ on $L^{\infty}(G)$ is given by
$g\cdot f(x)=f(g^{-1}x)$.
 
%Kesten's Criterion
A remarkable result of Kesten~\cite{Kesten} states that for finitely generated groups, amenability can be characterized in terms of cogrowth (or, equivalently, in terms of probability of return): a finitely generated group $G$ with symmetric generating set $S$ is amenable if and only if ${\rm CL}(n;G,S)^{1/n} \to |S|$ as $n\to\infty$, or equivalently, if the probability of return, $p_{G,S}(n)$, satisfies $p_{G,S}(n)^{1/n}\to 1$.

Garrabant and Pak~\cite[Conjecture 13]{GP} make the following conjecture: \emph{``Let G be an amenable group of superpolynomial growth, and let $S$ be a symmetric generating set. Then the probability of return sequence  $p_{G,S}(n)$ is not $P$-recursive.''}  In addition, they prove their conjecture holds for the following classes of groups:
\begin{itemize}
\item virtually solvable groups of exponential growth with finite Pr\"ufer rank;
\item amenable linear groups of superpolynomial growth;
\item groups of weakly exponential growth
$A\exp(n^{\alpha}) <\gamma_{G,S}(n) < B\exp(n^{\beta})$
with $A, B > 0$, and $0 < \alpha,\beta < 1$, where $\gamma_{G,S}(n)$ is the number of distinct elements of $G$ that can be expressed as a product of $n$ elements of $S$; 
\item the Baumslag-Solitar groups $BS(k,1)$ with $k\ge 2$;
\item the lamplighter groups $L(d, H) = \mathbb{Z}^d \wr H$, where $H$ is a finite abelian group and $d\ge 1$.
\end{itemize}

To prove Theorem~\ref{thm:main1} we use basic results about singularities of $D$-finite power series.  Observe that a cogrowth generating function has nonnegative integer coefficients and it has positive, finite radius of convergence; in particular, it is a $G$-function.

\section{The asymptotic growth of $G$-functions}
\label{sec:heart}
In this section, we use the theory of the asymptotic behaviour of $G$-functions near a singularity to deduce necessary conditions for $D$-finiteness of a series.  We first recall some basic facts about $G$-functions.  If $F$ is a $G$-function then it is annihilated by a Fuchsian differential operator in the first Weyl algebra and consequently the singularities of $F$ are all regular.  Following work from Chudnovsky and Chudnovsky \cite{CC}, Katz \cite{Katz}, Andr\'e \cite{And1} (see Andr\'e \cite{And2} for a discussion) we have that if $\rho$ is a nonzero singularity of $F$ then if $L$ is a closed ray starting at $\rho$, then there is a simply connected open set $U$, containing $0$ and $\rho$, such that $F$ admits an analytic continuation on $U\setminus L$, and in some open neighbourhood of $\rho$ intersected with $U$ we have an expression
\begin{equation} \label{eq:F} 
F(z)=\sum_{i=1}^s \sum_{k=0}^{k_i} (\rho-z)^{\lambda_i}(\log(\rho-z))^k f_{i,k}(\rho-z),
\end{equation}
with $\lambda_1,\ldots ,\lambda_s$ rational numbers such that $\lambda_i-\lambda_j\not\in \mathbb{Z}$ for $i\neq j$ and such that $f_{i,k}(\rho-z)$ is analytic in a neighbourhood of $z=\rho$ and for each $i$ there is some $k$ such that $f_{i,k}(0)\neq 0$.  

%Given two functions having the form above for some $\rho$, we say that $F$ and $G$ are equivalent, if their difference, $F-G$, is analytic in a neighbourhood of $z=\rho$.  Notice that if some $\lambda_j$ is a nonnegative integer, then $G(z):=F(z)-(\rho-z)^{\lambda_j} f_{j,0}(z)$ is equivalent to $F(z)$ and has the additional property that
%$$G(z) =  \sum_{i=1}^s \sum_{k=0}^{k_i} (\rho-z)^{\lambda_i+m\delta_{i,j}}(\log(\rho-z))^k g_{i,k}(\rho-z),$$ where $g_{j,0}(z)\equiv 0$, $m$ is the minimum order of vanishing of $f_{j,1}(z),\ldots ,f_{j,k_j}(z)$ at $z=0$,
%$g_{i,k}(z)=f_{i,k}(z)$ for $i\neq j$ and for all applicable $k$, and $g_{j,k}(z) = f_{j,k}(z)/z^m$ for $k=1,\ldots , k_j$.  In this way we see that given 
%Given a decomposition of $F$ of the form given in Equation (\ref{eq:F}), we always have that $F$ is equivalent to something of the above form with $f_{j,0}(z)\equiv 0$ whenever $\lambda_j$ is a nonnegative integer.  We say that a function $F(z)$ of the form given in Equation (\ref{eq:F}) is \emph{reduced} if $f_{j,0}(z)\equiv 0$ whenever $\lambda_j$ is a nonnegative integer.  
Given $F(z)$ as in Equation (\ref{eq:F}), we define
\begin{equation} 
\Lambda(F;\rho) = \sum_{\{j\colon \lambda_j\not \in \mathbb{Z}\}} \lambda_j,
\end{equation}
where we take an empty sum to be $0$.

In order to better understand the situation, we first focus on the case where there is only a single $\lambda_i$ in the expression for $F$.  This is the content of the following lemma.
\begin{lemma}
Let $\lambda$ be a rational number that is not an integer and let $\rho$ be a nonzero complex number. Suppose that $F(z)$ is a nonzero function with the property that in some open set of the form $U\setminus L$ with $U$ and open neighbourhood of $z=\rho$ and $L$ a ray emanating from $z=\rho$, we have $$F(z) = \sum_{k=0}^{\ell} (\rho-z)^{\lambda} (\log(\rho-z))^k f_k(\rho-z),$$ where each $f_k$ is analytic at $z=\rho$ and $f_k(0)\neq 0$ for some $k$.
Then $\Lambda(G';\rho) =\Lambda(G;\rho)-1$. 
%There are two cases. Of $\lambda$ is a positive integer, then we can write \[G'= -(\rho-z)^{\lambda-1} f_1(\rho-z) + H(z),\] where 
%\[H(z) = \sum_{k=1}^{\ell} (\rho-z)^{\lambda-1} (\log(\rho-z))^k h_k(\rho-z).\] Here each $h_k(\rho-z)$ is analytic in a neighbourhood of $z=\rho$ and 
%$h_k(0)\neq 0$ for some $k\ge 1$ (i.e., $\Lambda(H)=\lambda-1$). 
%If however, $\lambda =0$ then we can write \[G'= -(\rho-z)^{-1} f_1(\rho-z) + H(z),\] where either $f_1(0)=0$ and $\Lambda(H)=-1$ or $f_1(0)\neq 0$.
\label{lem:X}
\end{lemma}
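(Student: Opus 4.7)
The plan is to substitute $w=\rho-z$, differentiate the displayed expression for $F$ directly, and verify that the resulting expansion of $F'$ still has the shape required by the definition of $\Lambda$, with exponent $\lambda-1$ and at least one surviving nonzero leading coefficient.

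First I would introduce the local coordinate $w=\rho-z$, so that $\tfrac{d}{dz}=-\tfrac{d}{dw}$ and
\[
F(z)=w^{\lambda}\sum_{k=0}^{\ell}(\log w)^{k}f_{k}(w).
\]
Applying the product rule term by term yields
\[
F'(z)=-\,w^{\lambda-1}\sum_{k=0}^{\ell}(\log w)^{k}\,g_{k}(w),
\]
where, after collecting the contributions from differentiating $w^{\lambda}$, the logarithms, and each $f_{k}$, one finds
\[
g_{k}(w)=\lambda\,f_{k}(w)+(k+1)f_{k+1}(w)+w\,f_{k}'(w),
\]
with the convention $f_{\ell+1}\equiv 0$. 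Each $g_{k}$ is analytic at $w=0$, so $F'(z)$ has precisely the shape demanded by Equation~(\ref{eq:F}) with a single exponent $\lambda-1$, which remains a non-integer rational number since $\lambda\notin\mathbb{Z}$.

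The only real content is then to check that the new expansion is nondegenerate, i.e.\ that $g_{k}(0)\neq 0$ for some $k$, so that $\lambda-1$ genuinely appears in the local expansion of $F'$ and contributes to $\Lambda(F';\rho)$. For this I would argue by downward induction on $k$. Evaluating at $w=0$ gives
\[
g_{k}(0)=\lambda\,f_{k}(0)+(k+1)f_{k+1}(0).
\]
If every $g_{k}(0)$ vanished, then taking $k=\ell$ and using $\lambda\neq 0$ (because $\lambda$ is not an integer) would force $f_{\ell}(0)=0$, and the same relation for $k=\ell-1,\ell-2,\ldots,0$ would inductively force $f_{k}(0)=0$ for all $k$, contradicting the hypothesis that some $f_{k}(0)\neq 0$. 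Hence at least one coefficient $g_{k}$ in the expansion of $F'$ is nonzero at $w=0$.

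Combining the two steps, $F'$ admits a local expansion of the form~(\ref{eq:F}) at $\rho$ with the single non-integer exponent $\lambda-1$, and therefore
\[
\Lambda(F';\rho)=\lambda-1=\Lambda(F;\rho)-1,
\]
which is the desired conclusion (reading the statement's $G$ as a typo for $F$). The only point requiring care is the nonvanishing of the leading data after differentiation, and that is handled by the short triangular argument above; everything else is a direct calculation.
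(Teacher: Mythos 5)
Your proposal is correct and follows essentially the same route as the paper: differentiate the expansion directly, observe that the exponent drops to $\lambda-1$ (still a non-integer rational), and verify that some coefficient $g_k(0)=\lambda f_k(0)+(k+1)f_{k+1}(0)$ is nonzero so the exponent $\lambda-1$ genuinely appears. The only cosmetic difference is that the paper checks nondegeneracy by evaluating the coefficient at the largest $k$ with $f_k(0)\neq 0$ (where it equals $-\lambda f_k(0)\neq 0$), whereas you run the equivalent downward induction by contradiction.
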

\begin{proof}
We have 
\begin{eqnarray*} F'(z) &=&  - \sum_{k=0}^{\ell} \lambda (\rho-z)^{\lambda-1} (\log(\rho-z))^k f_k(\rho-z)\\
&-&  \sum_{k=0}^{\ell}   k (\rho-z)^{\lambda-1} \log(\rho-z)^{k-1} f_k(\rho-z)\\
&-& \sum_{k=0}^{\ell} (\rho-z)^{\lambda} (\log(\rho-z))^k f_k'(\rho-z).
\end{eqnarray*}
From this, we immediately see that $\Lambda(F')\ge \lambda-1$.  By assumption, $f_k(0)\neq 0$ for some $k$, so pick the largest $k$ for which it is nonzero.  Then the coefficient of $(\rho-z)^{\lambda-1} (\log(\rho-z))^k$ in $F'(z)$ is $-\lambda f_k(\rho-z) - (k+1) f_{k+1}(\rho-z)-(\rho-z) f_k'(\rho-z)$, where we take $f_{\ell+1}(z)=0$.  Then we see that this coefficient is equal to $-\lambda f_k(0)\neq 0$ at $z=\rho$ and so we are done.
\end{proof}
\begin{proposition}
Let $\rho$ be a positive real number, let $s\ge 1$ and let $\lambda_1,\ldots ,\lambda_s$ be distinct rational numbers with $\lambda_i-\lambda_j\not\in\mathbb{Z}$ for $i\neq j$. Suppose that $$F(z)=\sum_{i=1}^s \sum_{k=0}^{k_j} (\rho-z)^{\lambda_i}(\log(\rho-z))^k f_{i,k}(\rho-z),$$ in an open set of the form $U\setminus L$, where $U$ is an open neighbourhood of $z=\rho$ and $L$ is a closed ray emanating from $z=\rho$ such that $0\not\in L$, where the $f_{i,k}$ are analytic in a neighbourhood of $z=\rho$. Then either $F(z)$ is analytic in a neighbourhood of $\rho$ or there is some $j\ge 0$ such that $\limsup_{t\to 1^{-}} |F^{(j)}(\rho\cdot t)|\to\infty$.
\label{lem:div}
\end{proposition}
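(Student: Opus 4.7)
My plan is to analyze the asymptotic behavior of $F$ and its derivatives along the real approach $z = \rho t$ as $t\to 1^-$, using the distinct-congruence hypothesis on $\{\lambda_i\}$ to decouple contributions from different exponent classes into independent ``columns'' that cannot cancel each other. I first change variables to $\delta := \rho - z$ and expand each analytic $f_{i,k}(\delta)$ as a convergent Taylor series $\sum_m c_{i,k,m}\delta^m$, giving
\[
F(\rho - \delta) = \sum_{i,k,m} c_{i,k,m}\,\delta^{\lambda_i+m}(\log\delta)^k
\]
for $\delta>0$ small. Because $\lambda_i - \lambda_j \notin \mathbb{Z}$ when $i\neq j$, the $\delta$-exponents $\lambda_i + m$ are pairwise distinct across $(i, m)$, and the leading asymptotic of $F(\rho-\delta)$ as $\delta\to 0^+$ is determined by the lexicographically smallest pair $(\lambda_i + m, -k)$ for which $c_{i,k,m}\neq 0$. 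Iterating this observation yields linear independence: the coefficients $c_{i,k,m}$ are uniquely determined by the function $F(\rho - \delta)$.

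Consequently, $F$ extends analytically across $\rho$ if and only if its unique Puiseux-log expansion reduces to a pure Taylor series, i.e., $c_{i,k,m} = 0$ for every ``non-analytic'' index with $\lambda_i + m \notin \mathbb{Z}_{\geq 0}$ or $k \geq 1$. If $F$ is \emph{not} analytic at $\rho$, there must exist a nonzero obstruction coefficient; I select one with minimal $\beta_* := \lambda_{i_*} + m_*$, and at that exponent the maximal $k_*$ with $c_{i_*,k_*,m_*} \neq 0$.

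Differentiation $\partial_z$ corresponds up to sign to $\partial_\delta$, and the identity $\partial_\delta[\delta^{\lambda}(\log\delta)^k] = \lambda\,\delta^{\lambda-1}(\log\delta)^k + k\,\delta^{\lambda-1}(\log\delta)^{k-1}$ shows that differentiation preserves the congruence-class decomposition: each column $\{(i,k,m)\}_{k,m}$ (fixed $i$) remains in the class $\lambda_i\bmod\mathbb{Z}$. Thus columns indexed by distinct $i$ remain asymptotically disjoint and cannot cancel one another. Iterating Lemma~\ref{lem:X} within the $i_*$-column, for $j > \beta_*$ the leading contribution to $F^{(j)}(\rho - \delta)$ from this column has $\delta$-exponent $\beta_* - j < 0$ with a nonzero coefficient---in the non-integer case, the falling factorial $\beta_*(\beta_*-1)\cdots(\beta_*-j+1)$ is nonzero, and the non-cancellation between columns then forces $|F^{(j)}(\rho t)| \to \infty$ as $t \to 1^-$. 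The main obstacle is the case $\beta_* \in \mathbb{Z}_{\geq 0}$, which under non-analyticity forces $k_* \geq 1$: here the falling factorial vanishes once $j > \beta_*$, killing the naive leading coefficient, and one must instead track the subleading contributions produced by repeated differentiation of $(\log\delta)^{k_*}$. A direct Leibniz computation shows that at $j = \beta_* + 1$ a surviving term of the form $\beta_*!\,k_*\,c_{i_*,k_*,m_*}\,\delta^{-1}(\log\delta)^{k_*-1}$ appears, which diverges as $\delta \to 0^+$, and further derivatives only worsen the divergence.
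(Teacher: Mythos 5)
Your proposal is correct, and it is organized rather differently from the paper's argument. The paper first disposes of the case of a negative leading exponent directly, then runs a minimal-counterexample argument on the invariant $\Lambda(F)$ (the sum of the non-integer exponents), using Lemma~\ref{lem:X} to show $\Lambda$ drops under differentiation, which reduces everything to the pure-logarithmic case $s=1$, $\lambda_1\in\mathbb{Z}_{\ge 0}$; that residual case is then handled by a second induction on the minimal order of vanishing of the $g_j$'s. You instead work with the full Puiseux--log coefficient expansion, select the minimal ``obstruction'' exponent $\beta_*$ and the maximal log power $k_*$ there, and explicitly compute the most singular surviving coefficient after finitely many derivatives: the falling factorial $\beta_*(\beta_*-1)\cdots(\beta_*-j+1)$ when $\beta_*\notin\mathbb{Z}_{\ge 0}$ (note this also covers negative integer $\beta_*$, where the factorial still cannot vanish), and the term $\beta_*!\,k_*\,c\,\delta^{-1}(\log\delta)^{k_*-1}$ at $j=\beta_*+1$ when $\beta_*\in\mathbb{Z}_{\ge 0}$; your observation that differentiation preserves the congruence classes $\lambda_i\bmod\mathbb{Z}$, so that only the column of $\beta_*$ can contribute at the exponent $\beta_*-j$ and no cancellation occurs, is exactly right, and both leading-coefficient computations check out. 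What your route buys is an explicit $j$ and the avoidance of the paper's somewhat delicate infimum-over-$\Lambda$ bookkeeping and double induction; what the paper's function-level formulation buys is that it never leaves the finitely many analytic functions $f_{i,k}$, so asymptotic dominance is immediate. In your coefficientwise version you should add one sentence noting that the tails of the Taylor expansions regroup into analytic factors (so that after $j$ termwise differentiations $F^{(j)}$ is again a finite sum of terms $\delta^{\lambda_i-r}(\log\delta)^{k'}h(\delta)$ with $h$ analytic, and all terms other than your selected one are genuinely $o$ of it as $\delta\to 0^+$); this is routine, but it is the step that replaces the paper's practice of arguing at the level of the $f_{i,k}$ rather than their coefficients.
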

\begin{proof} We may assume without loss of generality that 
$$\lambda_1<\lambda_2<\cdots <\lambda_s.$$
First suppose that $\lambda_1<0$ and that $\limsup_{t\to 1^{-}} |F(\rho t)|\not\to\infty$.
Then $F(t)(\rho(1-t))^{-\lambda_1}\to 0$ as $t\to 1^{-}$.  But
$$F(t)(\rho-t)^{-\lambda_1} = \sum_{k=0}^{k_1} (\log(\rho-t))^{k} f_{1,k}(\rho-t) + \sum_{i=2}^s \sum_{k=0}^{k_i} (\rho-t)^{\lambda_i-\lambda_1} \log(\rho-t)^{k} f_{i,k}(\rho-t).$$  Now $\lambda_i-\lambda_1>0$ for $i=2,\ldots ,s$ and each $f_{i,k}$ is analytic in a neighbourhood of $z=\rho$ and so 
$$(\rho(1-t))^{\lambda_i-\lambda_1} \log(\rho(1-t))^{k} f_{i,k}(\rho(1-t))\to 0$$ as $t\to 1^{-}$  In particular, since
$F(t)(\rho(1-t))^{-\lambda_1}\to 0$ as $t\to 1^{-}$, we see that 
$$ \sum_{k=0}^{k_1} (\log(\rho-t))^{k} f_{1,k}(\rho(1-t)) \to 0$$ as $t\to 1^{-}$.
  
Now let $c_k = f_{1,k}(0)$.  Then by assumption there is some $k$ such that $c_k\neq 0$.  Moreover, by L'H\^opital's rule we have $(\log(\rho(1-t)))^k (f_{1,k}(\rho(1-t))-c_k) \to 0$ as $t\to 1^{-}$ for all $k$.  Hence the fact that
$$ \sum_{k=0}^{k_1} (\log(\rho(1-t)))^{k} f_{1,k}(\rho(1-t)) \to 0$$ as $t\to 1^{-}$ gives that
$$ \sum_{k=0}^{k_1} c_k (\log(\rho(1-t)))^{k}  \to 0$$ as $t\to 1^{-}$.  Let $t=1-1/n$.  Then we have
$$\sum_{k=0}^{k_1} c_k(-1)^k (\log\, \rho+ \log\, n)^k \to 0$$ as $n\to \infty$, which is impossible since $\log n\to \infty$ as $n\to \infty$ and $c_k\neq 0$ for some $k$. 
 
So we may assume that $\lambda_1\ge 0$.  In particular, we may assume that $\Lambda(F)\ge 0$ is nonnegative and is zero if and only if $s=1$ and $\lambda_1$ is a nonnegative integer.  So suppose that the conclusion to the statement of the lemma does not hold and let $\alpha\ge 0$ denote the infimum of all $\Lambda(F)$ such that $F$ is of the form in the statement of the proposition and such that the conclusion to the statement of the proposition does not hold for $F$.  Then we may pick an $F$ for which the conclusion doesn't hold with $\Lambda(F) <\alpha +1/2$.  Then by Lemma \ref{lem:X} we have
$\Lambda(F') =\Lambda(F) -s+\epsilon$, where $\epsilon=1$ if some $\lambda_j$ is a nonnegative integer and is zero otherwise.
Notice that $\Lambda(F') < \alpha$ unless $s=1$ and $\epsilon=1$.  In particular if $(s,\epsilon)\neq (1,1)$, then $\Lambda(F')<\alpha$ and so by definition of $\alpha$, there is some $j$ such that 
$\limsup_{t\to 1^{-}} |(F')^{(j)}(\rho\cdot t)| = \infty$.  But we then have $\limsup_{t\to 1^{-}} |F^{(j+1)}(\rho \cdot t)| =\infty$, as desired.

Thus we may assume that $s=1$ and $\lambda_1$ is a nonnegative integer.  In particular, 
$$F(z) = \sum_{j=0}^m (\log(\rho-z))^j g_j(\rho-z),$$ for some functions $g_0,\ldots ,g_m$ that are analytic in a neighbourhood of $z=\rho$ with $g_m\not\equiv 0$.  We may also assume that $F(z)$ is not analytic in a neighbourhood of $z=\rho$ and thus $m\ge 1$.  We let $\kappa$ denote the minimum of the orders of vanishing of $g_1(\rho-z),\ldots  ,g_m(\rho-z)$ at $z=\rho$.  We prove the claim by induction on $\kappa$.  When $\kappa=0$, we have
$g_m(0)\neq 0$.  Then 
$$\lim_{n\to\infty} |F(\rho-1/n)|= \lim_{n\to \infty} \left|\sum_{j=0}^m \log(n)^j g_j(1/n)\right|= \lim_{n\to\infty} \left|\sum_{j=0}^m g_j(0) \log(n)^j \right|= \infty,$$ since $g_k(0)\neq 0$ for some $k\in \{1,\ldots ,m\}$.  Now suppose that the claim holds whenever $\kappa<p$ and that the minimum of the orders of vanishing of $g_1(\rho-z),\ldots ,g_m(\rho-z)$ at $z=\rho$ is $p$ with $p\ge 1$.  
Then
$$F'(z) = \sum_{j=0}^m (\log(\rho-z))^j \left( g_j'(\rho-z) - (j+1) g_{j+1}(\rho-z)(\rho-z)^{-1}\right),$$ where we take $g_{m+1}=0$.
Now by assumption there is some largest $k\le m$ with $k\ge 1$ for which $g_k(\rho-z)$ vanishes to order exactly $p$ at $z=\rho$.  
Then $g_k'(\rho-z) - (k+1)g_{k+1}(\rho-z)(\rho-z)^{-1}$ vanishes to order exactly $p-1$ since $g_{k+1}(\rho-z)$ vanishes to order at least $p+1$; moreover, the functions
 $g_j'(\rho-z) - (j+1) g_{j+1}(\rho-z)(\rho-z)^{-1}$ vanish to order at least $p-1$ for all $j$. It follows by the induction hypothesis that 
there is some $j$ such that 
$\limsup_{t\to 1^{-}} |(F')^{(j)}(t\rho )| = \infty$.  But we then have $\limsup_{t\to 1^{-}} |F^{(j+1)}(t\rho )| =\infty$, which completes the proof of the proposition.
\end{proof} 
As a corollary of Proposition \ref{lem:div}, we have a criterion for non-$D$-finiteness, which is applicable to many series coming from combinatorics.
\begin{corollary} Let $F(t)\in \mathbb{Z}[[t]]$ be a power series with nonnegative integer coefficients and suppose that $F(t)$ has radius of convergence $\rho>0$.  If $F(t)$ is $D$-finite then there is some $j\ge 0$ such that $\lim_{t\to \rho^{-}} |F^{(j)}(t)|\to\infty$. \label{cor}
\end{corollary}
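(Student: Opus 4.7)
The plan is to derive the corollary as a direct consequence of Proposition~\ref{lem:div}, combined with the observations that $F$ is a $G$-function and that $\rho$ itself is a singularity of $F$.

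First I would note that the hypotheses force $F(t)$ to be a $G$-function: a power series with integer coefficients and positive finite radius of convergence has polynomially bounded Archimedean growth and trivial denominators. Since $F$ is also $D$-finite, the Chudnovsky-Chudnovsky/Katz/Andr\'e structure theorem described in Equation~(\ref{eq:F}) is available at each nonzero singularity. By Pringsheim's theorem for series with nonnegative coefficients, $t=\rho$ is a singularity of $F$ on its circle of convergence, so $F$ admits a local expansion of the form~(\ref{eq:F}) on some slit domain $U\setminus L$, where $U$ is an open neighborhood of $\rho$ and $L$ is a closed ray emanating from $\rho$. I would use the freedom in the choice of $L$ to orient it in the direction pointing away from the origin (i.e., along the positive real axis outward from $\rho$); this ensures both that $0\notin L$, matching the hypothesis of Proposition~\ref{lem:div}, and that a half-open real interval $(\rho-\epsilon,\rho)$ lies in $U\setminus L$ for sufficiently small $\epsilon>0$.

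With this setup, Proposition~\ref{lem:div} applies. Since $\rho$ is a singularity, $F$ is not analytic in a neighborhood of $\rho$, so the proposition produces some $j\ge 0$ with $\limsup_{t\to 1^{-}}|F^{(j)}(\rho t)|=\infty$, or equivalently $\limsup_{t\to \rho^{-}}|F^{(j)}(t)|=\infty$. The final step is to upgrade this $\limsup$ to a limit: because $F(t)=\sum a_n t^n$ has nonnegative coefficients, every derivative
$$F^{(j)}(t)=\sum_{n\ge j} n(n-1)\cdots(n-j+1)\, a_n\, t^{n-j}$$
also has nonnegative coefficients, and hence $F^{(j)}$ is monotonically nondecreasing on the real interval $[0,\rho)$. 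Consequently the one-sided limit $\lim_{t\to\rho^{-}} F^{(j)}(t)$ exists in $[0,\infty]$ and necessarily coincides with the limsup, which is $\infty$.

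I do not anticipate a significant obstacle: the technical content is entirely contained in Proposition~\ref{lem:div}. The only things to verify are that Pringsheim's theorem supplies a singularity at the real point $\rho$, that the ray $L$ can be oriented away from the origin so the segment from $0$ to $\rho$ avoids it, and that the monotonicity of $F^{(j)}$ along $[0,\rho)$ converts the limsup into an honest limit.
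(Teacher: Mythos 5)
Your proposal is correct and follows essentially the same route as the paper: Pringsheim's theorem supplies the singularity at $t=\rho$, the $G$-function structure theorem gives the local expansion on a slit neighbourhood with the ray chosen so that $[0,\rho)$ avoids it, and Proposition~\ref{lem:div} then yields the divergence of some derivative along the real axis. Your final observation that nonnegativity of the coefficients makes $F^{(j)}$ monotone on $[0,\rho)$, so the $\limsup$ is an honest limit, is a small step the paper leaves implicit, and it is a worthwhile addition.
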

\begin{proof}
Suppose that $F$ is $D$-finite. By Pringsheim's theorem $\rho$ is a singularity of $F(z)$.  Since $F$ has positive finite radius of convergence and has integer coefficients, we have $F$ is a $G$-function and as a result there is an open neighbourhood $U$ of $\rho$ and a closed ray $L$ emanating from $\rho$ such that on $U$ we have
$$F(z)=\sum_{i=1}^s \sum_{k=0}^{k_i} (\rho-z)^{\lambda_i}(\log(\rho-z))^k f_{i,k}(\rho-z),$$
where $s\ge 1$ and $\lambda_1,\ldots ,\lambda_s$ are distinct rational numbers with $\lambda_i-\lambda_j\not\in\mathbb{Z}$ for $i\neq j$, and the $f_{i,k}(\rho-z)$ are analytic in a neighbourhood of $z=\rho$.  Moreover, we may assume that for each $j$ there is some $k\in \{0,\ldots ,k_j\}$ such that $f_{j,k}(0)\neq 0$ and that $[0,\rho)\not\subseteq L$.  Since $F$ is not analytic at $z=\rho$, by Proposition~\ref{lem:div} we have that there is some $j\ge 0$ such that $\limsup_{t\to \rho^{-}} |F^{(j)}(t)|\to\infty$.  The result follows.
\end{proof}
We note that a $D$-finite power series has only finitely many singularities, therefore one could quickly deduce the conclusion of Corollary \ref{cor} if the following question had an affirmative answer.
\begin{question} Let $F(z)$ be a power series with nonnegative integer coefficients having positive finite radius of convergence $r$. If $F^{(j)}(z)$ converges absolutely on the circle of radius $r$ for every $j\ge 0$, does $F(z)$ admit the circle of radius $r$ as its natural boundary?
\end{question}
\section{Proof of Theorem \ref{thm:main1}}
\label{sec:th}
In this section, we give the proof of Theorem \ref{thm:main1}. To obtain the proof of Theorem \ref{thm:main1}, we need a remark.
\begin{remark}
\label{rem:S}
Let $G$ be a finitely generated group that is generated by a finite symmetric generating set $S$, and let $\kappa$ be a nonnegative real number.  If 
${\rm CL}(n;G,S)\ge |S|^n/n^{\kappa}$ for infinitely many $n$, then $G$ is virtually nilpotent.
\end{remark}
\begin{proof} Let $V(n)$ denote the number of distinct elements of $G$ that can be expressed as a product of at most $n$ elements of $S$. A theorem of Varopolous (see \cite{V+},~\cite[Theorem 2]{SC}) shows that if $V(n)\ge cn^d$ for all $n$ then we have that $p_{G,S}(n)\le C_1 n^{-d/2}$ for some positive constant $C_1$ and so ${\rm CL}(n;G,S) \le C_1 |S|^n/n^{d/2}$ for all $n$. Consequently if ${\rm CL}(n;G,S)\ge |S|^n/n^{\kappa}$ for infinitely many $n$, and we pick an integer $p$ such that $p>\kappa$ then for each positive constant $C_2$, we have $${\rm CL}(n;G,S) \ge C_2 |S|^n n^{p}$$ for infinitely many $n$.  In particular, if $c$ is a positive constant, we cannot have $V(n)\ge c n^{2p}$ for all $n$ by the above remarks.  It follows that $V(n) < c n^{2p}$ for infinitely many $n$ since if there were only finitely many such $n$, we could find a positive constant $c'>c$ such that 
$V(n) < c' n^{2p}$ for all $n$.  A strengthening of Gromov's theorem due to Wilkie and van den Dries~\cite{DW}, which shows that if there is a polynomial $P(x)$ and an infinite set of $n$ for which $V(n)< P(n)$ then $G$ must be virtually nilpotent, now gives the result.
\end{proof}
\begin{theorem} Let $G$ be a finitely generated amenable group that is generated by a finite symmetric subset $S$.  If $G$ is not virtually nilpotent then the cogrowth series of $G$ with respect to $S$ is not $D$-finite.
\end{theorem}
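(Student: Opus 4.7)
The plan is to combine the non-$D$-finiteness criterion of Corollary \ref{cor} with Kesten's cogrowth characterization of amenability and the Varopoulos--Gromov consequence recorded in Remark \ref{rem:S}. Assume for contradiction that the cogrowth series
\[
F(t) := \sum_n {\rm CL}(n;G,S)\, t^n
\]
is $D$-finite. Since $G$ is amenable, Kesten's theorem gives ${\rm CL}(n;G,S)^{1/n}\to |S|$, so $F$ has positive finite radius of convergence $\rho = 1/|S|$. The coefficients of $F$ are nonnegative integers, so Corollary \ref{cor} applies and produces some $j\ge 0$ with $|F^{(j)}(t)| \to \infty$ as $t \to \rho^-$. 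The monotonicity of $F^{(j)}$ on $(0,\rho)$, which follows from the fact that $F^{(j)}$ still has nonnegative coefficients, promotes this to an honest monotone limit.

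The next step is to convert this analytic blow-up into a lower bound on the cogrowth sequence. By Abel's theorem for power series with nonnegative terms, divergence of $F^{(j)}(t)$ as $t \to \rho^-$ is equivalent to divergence of the numerical series
\[
\sum_{n\ge j} n(n-1)\cdots (n-j+1)\,{\rm CL}(n;G,S)\, \rho^{n-j},
\]
which in turn forces $\sum_n n^j\,{\rm CL}(n;G,S)\,|S|^{-n} = \infty$. Comparing with the convergent series $\sum 1/n^2$, there must exist infinitely many $n$ for which
\[
\frac{n^j\,{\rm CL}(n;G,S)}{|S|^n} \;>\; \frac{1}{n^2},
\]
since otherwise the general term would eventually be dominated by $1/n^2$ and the series would converge. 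Rearranging, ${\rm CL}(n;G,S) > |S|^n/n^{j+2}$ for infinitely many~$n$.

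Finally, applying Remark \ref{rem:S} with the exponent $\kappa := j+2$, I conclude that $G$ is virtually nilpotent, contradicting the hypothesis. The real analytic content is already packaged in Corollary \ref{cor} and the Kesten/Varopoulos inputs; the main care required is in the middle step, where one must pass cleanly from the boundary behaviour of $F^{(j)}$ near $\rho$ to a coefficient lower bound in the precise polynomial form that Remark \ref{rem:S} consumes. The only potential pitfall is making sure that the particular integer $j$ produced by Corollary \ref{cor} is carried through to the exponent $\kappa$ without any loss; the comparison with $\sum 1/n^2$ is what supplies this passage.
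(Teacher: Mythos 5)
Your proposal is correct and follows essentially the same route as the paper: Kesten's criterion pins the radius of convergence at $1/|S|$, the $G$-function singularity analysis (which you invoke via Corollary \ref{cor}, the paper via Proposition \ref{lem:div} directly) yields a derivative blowing up at $\rho$, the comparison with $\sum 1/n^2$ converts this to ${\rm CL}(n;G,S) > |S|^n/n^{j+2}$ infinitely often, and Remark \ref{rem:S} with $\kappa=j+2$ gives the contradiction. No gaps.
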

\begin{proof}
Let \[F(t)=\sum_{n\ge 0} {\rm CL}(n;G,S)t^n\] and suppose that $F(t)$ is $D$-finite and $G$ is amenable.  Then $F(t)$ has integer coefficients and by Kesten's 
criterion~\cite{Kesten} we have that $${\rm CL}(n,G,S)^{1/n}\to |S|.$$  In particular, the radius of convergence of 
$F$ is $\rho:=1/|S|$, $F$ is analytic inside the disc of radius $\rho$, and $F$ is a $G$-function.  By 
Pringsheim's theorem, $F$ has a singularity at $t=\rho$.  
Then by Proposition~\ref{lem:div} we see that there is some $j\ge 0$ such that $\lim_{t\to \rho^{-}} F^{(j)}(t)\to\infty$.  In particular, 
the sum $$F^{(j)}(t)=\sum_{n\ge 0} {\rm CL}(n;G,S)n(n-1)\cdots (n-j+1) t^{n-j}$$ diverges at $t=\rho=1/|S|$.  
This necessarily gives ${\rm CL}(n;G,S)n(n-1)\cdots (n-j+1)|S|^{-j+n} > |S|^{-j} /n^2$ for infinitely many $n$, since otherwise, the sum would converge at $t=\rho$.
Hence $${\rm CL}(n;G,S)>|S|^{n}/(n^2\cdot (n(n-1)\cdots (n-j+1))) \ge |S|^n/n^{j+2}$$ for infinitely many $n$.
But an application of Remark \ref{rem:S} with $\kappa=j+2$ gives that $G$ is virtually nilpotent.  The result follows. 
\end{proof}
\section{Concluding remarks}\label{sec:conc}
We showed that finitely generated amenable groups that are not virtually nilpotent have non-$P$-recursive associated cogrowth sequences.  It is natural to ask what happens in the virtually nilpotent case.  It is not difficult to show that virtually abelian groups have $P$-recursive cogrowth sequences.  The reason for this is that in the torsion free abelian case one can interpret the cogrowth generating function as a diagonal of a multivariate rational power series. Such a series is known to be $D$-finite.  Dealing with the virtually abelian case presents minor additional difficulties and can be dealt with by first fixing a free abelian subgroup $H$ of finite index and then, given a generating set $S$, determining the regular sublanguage of $S^*$ consisting of words in $S$ that are in $H$---this is relatively simple to compute.  Once one has this, it is not difficult to express the cogrowth generating function as a sum of diagonals.

In the non-virtually abelian case we do not know whether the cogrowth sequence for a virtually nilpotent group can be $P$-recursive.  In particular, the case of the Heisenberg group, of unipotent upper-triangular integer matrices is an interesting case to work out.  A question related to Stanley's conjecture~ \cite{Stan} concerning whether the generating function for ${2n\choose n}^d$ (as a function of $n$) is transcendental for $d\ge 2$ (which was solved by Flajolet \cite{Flaj} and Sharif and Woodcock \cite{SW}), is the question of whether a virtually nilpotent group that is not virtually cyclic must have transcendental cogrowth generating series.   The connection is the observation that ${2n\choose n}^d$ is precisely the cogrowth sequence for the group $\mathbb{Z}^d = \langle x_1,\ldots ,x_d ~
|~ x_ix_j=x_jx_i\rangle$ with $S=\{x_1^{\pm 1},\ldots ,x_d^{\pm 1}\}$.  In general, for a finitely generated virtually nilpotent group $G$, one can find a (finitely generated) normal nilpotent subgroup $N$ of finite index.  If we let $d_i$ denote the rank of the $i$-th group in the descending central series, then if
$${\rm GKdim}(N):=\sum_{i\ge 1} i d_i$$ is even, then work of Bass~\cite{Bass}, along with work of Varopoulous~\cite{V+}, and asymptotic results concerning coefficients of algebraic power series (see, for example, \cite{Jung}) show that the cogrowth generating function is not algebraic.  This applies, in particular to the Heisenberg group $H$, which has ${\rm GKdim}(H)=4$.   Kuksov~\cite{Kuksov2} showed that groups whose cogrowth generating function is rational are exactly the finite groups (although he uses a slightly different definition of cogrowth, the proof is easily modified), and these are precisely the groups where the language consisting of words on the generating set that are equal to the identity forms a regular language.  Thus, by analogy, one might guess that groups whose associated cogrowth series are all algebraic are necessarily virtually free, since these are the groups for which the word problem is context-free.  We thus pose the following question.
\begin{question} Let $G$ be a finitely generated group with finite symmetric generating set $S$.  If $\sum {\rm CL}(n;G,S)t^n$ is algebraic, is $G$ virtually free?
\end{question}

\section*{Acknowledgment}
We thank Igor Pak, Peter Paule, and the referee for many useful comments.

%Laurent Saloff-Coste.
%\newblock Probability on groups: random walks and invariant diffusions.
%\newblock {\em Notices Amer. Math. Soc.}, 48(9):968--977, 2001.

%\bibitem{Jung}
%R.~Jungen.
%\newblock Sur les s\'{e}ries de {T}aylor n'ayant que des singularit\'{e}s
 % alg\'{e}brico-logarithmiques sur leur cercle de convergence.
%\newblock {\em Comment. Math. Helv.}, 3(1):266--306, 1931.

%\end{thebibliography}

%\bibliography{cogrowth}
%\end{document}

\end{document}